\documentclass[10pt]{amsart}
\usepackage{setspace,xypic,somedefs,tracefnt,latexsym,rawfonts,graphicx,amsfonts,amssymb,hyphenat,latexsym,enumerate,amscd}
\makeatletter
\@namedef{subjclassname@2020}{%
  \textup{2020} Mathematics Subject Classification}
\makeatother
\def\cal{\mathcal}
\def\Bbb{\mathbb}

\def\r{\rangle}
\def\l{\langle}

\def\wt{\widetilde}

\def\bar{\overline}
\newtheorem{thm}{Theorem}[section]

\newtheorem{lemma}[thm]{Lemma}

\newtheorem{defn}[thm]{Definition}
\newtheorem{rem}[thm]{Remark}

\numberwithin{equation}{section}
\makeatletter
\newcommand{\colim@}[2]{%
  \vtop{\m@th\ialign{##\cr
    \hfil$#1\operator@font colim$\hfil\cr
    \noalign{\nointerlineskip\kern1.5\ex@}#2\cr
    \noalign{\nointerlineskip\kern-\ex@}\cr}}%
}
\newcommand{\colim}{%
  \mathop{\mathpalette\colim@{\rightarrowfill@\textstyle}}\nmlimits@
}
\makeatother
\begin{document}
\date{\today}
\title[A four-term exact sequence of surface orbifold pure braid
groups]{A four-term exact sequence of surface orbifold pure braid groups}
\author[S.K. Roushon]{S.K. Roushon}
\address{School of Mathematics\\
Tata Institute\\
Homi Bhabha Road\\
Mumbai 400005, India}
\email{roushon@math.tifr.res.in} 
\urladdr{https://mathweb.tifr.res.in/\~\!\!\! roushon/}
\begin{abstract} We prove a four-term exact sequence of
  surface orbifold pure braid groups for all genus $\geq 1$, $2$-dimensional
  orientable orbifolds with cone points. This also corrects and
  generalizes our earlier result (genus zero case) in \cite{Rou20} and \cite{Rou21}.
\end{abstract}
 
\keywords{Orbifold, orbifold fundamental group, configuration space.}

\subjclass[2020]{Primary: 22A22, 14N20, 20F36; Secondary: 55R80.}
\maketitle

\section{Introduction}
Let $M$ be a connected smooth manifold of dimension $\geq 2$. Let 
$PB_n(M)$ be the configuration space of ordered 
$n$-tuples of pairwise distinct points of $M$. Then, the Fadell-Neuwirth
fibration theorem (\cite{FN62}) says that, for $n\geq 2$, the projection map $M^n\to M^{n-1}$
to the first $n-1$ coordinates defines a fibration
$f(M):PB_n(M)\to PB_{n-1}(M)$, with fiber homeomorphic to $\wt M:=M-\{(n-1)\
\text{points}\}$. Hence $f(M)$ induces the following long exact sequence of homotopy
groups.
\begin{align}\label{1.1}
 \xymatrix@C-1.2pc{\cdots\ar[r]&\pi_2(PB_{n-1}(M))\ar[r]&\pi_1(\wt M)\ar[r]&\pi_1(PB_n(M))\ar[r]&\pi_1(PB_{n-1}(M))\ar[r]&1.}\end{align}

It is an important subject to study the 
homotopy groups, especially the fundamental groups of the 
configuration spaces of a manifold. Since in dimension $\geq 3$, the space 
$PB_n(M)$ and the product manifold $M^n$ have isomorphic fundamental groups, 
the dimension $2$ case is of much interest. For $M={\Bbb C}$, $\pi_1(PB_n(M))$ is known as the 
pure braid group on $n$ strings. The braid groups were introduced in \cite{Ar47}, and
appear in a wide range of areas in 
both Mathematics and Physics. 

In \cite{Rou20} we studied the possibility of extending the Fadell-Neuwirth fibration
theorem for orbifolds, to understand a certain class of Artin groups.
Orbifolds are also of fundamental
importance in algebraic and differential geometry, topology and
string theory. However, 
to define a fibration between orbifolds, we had to consider the category of 
Lie groupoids. Since an orbifold can be realized as a Lie groupoid (\cite{Moe02}),
and there are enough tools in this category to define a fibration.
There, we defined two notions ($a$ and $b$-types)
of a fibration ([\cite{Rou20}, Definition 2.4]) and the corresponding ($a$ and $b$-types)
configuration Lie groupoids of a   
Lie groupoid to enable us to state a Fadell-Neuwirth type theorem.
For an orbifold $M$, the $b$-type configuration Lie groupoid is the correct
model to induce the orbifold structure on $PB_n(M)$. We proved that the
Fadell-Neuwirth fibration theorem extends in this generality, under some strong hypothesis
($c$-groupoid). We also showed that this is the best possible extension. For 
this, we deduced that
the map $f(M)$ is not a $a$(or $b$)-type fibration for the $a$(or
$b$)-type configuration 
Lie groupoids of Lie groupoids, corresponding to global quotient
compact orbifolds of dimension $\geq 2$, with
non-empty singular set (See [\cite{Rou23-1}, Proposition
3.1]). Recently, in \cite{JF23} Flechsig corrected the  
short exact sequence of fundamental groups of the $b$-type configuration Lie
groupoids, we deduced in [\cite{Rou20}, Theorem 2.14, Remark 2.15] corresponding
to all genus zero $2$-dimensional orbifolds    
with cone points and at least one puncture. He showed that it is in fact a four-term
exact sequence. This also implies that $f(M)$ is not even a
`quasifibration' of orbifolds. See Remark \ref{les}. 

In this paper we correct the analogous short exact sequence, for
all genus $\geq 1$, $2$-dimensional orbifolds with cone points, 
we gave in \cite{Rou21}, and establish a similar four-term exact sequence.
Using \cite{JF23}, we further show that it is not a short exact
sequence if the orbifold has a cone point (Theorem \ref{ig}).

The
main ingredients of the proof of Theorem \ref{ig} is the presentation of the
surface pure braid groups of an orientable surface of genus $\geq 1$, from
\cite{Bel04}. 

\section{A Four-term exact sequence}\label{MR}
For a connected $2$-dimensional orbifold $M$, $PB_n(M)$ is again
an orbifold.

\begin{defn}{\rm 
The orbifold fundamental group
of  $PB_n(M)$ is called the {\it surface orbifold pure braid} group
of $M$ on $n$ strings.}\end{defn}

Let ${\cal C}_0$ be the class of all connected, genus zero $2$-dimensional
orbifolds with cone points and at least one puncture, and
let ${\cal C}_1$ be the class of all connected, genus $\geq 1$, $2$-dimensional orientable orbifolds  
with cone points. If $M\in{\cal C}_0\cup {\cal C}_1$ has nonempty boundary, then
we replace each boundary component of $M$ by a puncture.

We prove the following theorem.

  \begin{thm}\label{ig} There is a four-term exact sequence of surface orbifold pure braid
  groups of $M\in {\cal C}_1$, as follows.
  
\centerline{
 \xymatrix@C-.6pc{1\ar[r]&K(M, n-1)\ar[r]&\pi_1^{orb}(\wt M)\ar[r]&\pi_1^{orb}(PB_n(M))\ar[r]^{f(M)_*}&\pi_1^{orb}(PB_{n-1}(M))\ar[r]&1.}}
\noindent
Here $\wt M=M-\{(n-1)\ \text{smooth points}\}$. Furthermore, if $M$
has a cone point, then $K(M, n-1)\neq \l 1\r$.\end{thm}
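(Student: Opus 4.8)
The plan is to follow the Fadell--Neuwirth strategy adapted to the orbifold category. The map $f(M):PB_n(M)\to PB_{n-1}(M)$ forgets the last point, and the fiber is the orbifold $\wt M = M-\{(n-1)\ \text{smooth points}\}$. In the manifold case this is a genuine fibration and one reads off the long exact sequence \eqref{1.1}; in the orbifold setting, by the analysis recalled in the introduction (and corrected by Flechsig in \cite{JF23} for the genus zero class $\cal C_0$), $f(M)$ fails to be a fibration precisely because the last point can collide with a cone point, so the naive short exact sequence acquires an extra term. First I would set up the long exact sequence of orbifold homotopy groups associated to $f(M)$, using the $c$-groupoid hypothesis and the extension of the Fadell--Neuwirth theorem established in \cite{Rou20}, and identify $K(M,n-1)$ as the image (or cokernel) controlling the failure of injectivity of $\pi_1^{orb}(\wt M)\to\pi_1^{orb}(PB_n(M))$, i.e.\ as the relevant piece coming from $\pi_2^{orb}(PB_{n-1}(M))$.

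Next I would establish exactness at the four listed terms. Surjectivity of $f(M)_*$ onto $\pi_1^{orb}(PB_{n-1}(M))$ and exactness at $\pi_1^{orb}(PB_n(M))$ follow from the long exact homotopy sequence of the underlying orbifold fibration, just as in \eqref{1.1}. Exactness at $\pi_1^{orb}(\wt M)$ is where the term $K(M,n-1)$ enters: I would define $K(M,n-1)$ to be the kernel of the map $\pi_1^{orb}(\wt M)\to\pi_1^{orb}(PB_n(M))$, which is the image of the connecting homomorphism from $\pi_2^{orb}(PB_{n-1}(M))$, and verify that the sequence is exact there by the standard diagram chase. The genus $\geq 1$ hypothesis should guarantee that the base $PB_{n-1}(M)$ has trivial higher orbifold homotopy in the relevant range away from the cone-point contributions, so that $K(M,n-1)$ is governed entirely by the local monodromy around the cone points of $M$.

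The heart of the theorem, and the part I expect to be the main obstacle, is the \emph{non-triviality} assertion: if $M$ has a cone point then $K(M,n-1)\neq\l 1\r$. Here I would argue by exhibiting an explicit nontrivial element. The natural candidate is the class of a loop in $\wt M$ that encircles one of the cone points of $M$; in $\pi_1^{orb}(\wt M)$ this element has finite order equal to the order of the cone point, whereas I claim its image in $\pi_1^{orb}(PB_n(M))$ is trivial. To make this precise I would invoke the surface pure braid group presentation of \cite{Bel04} for orientable surfaces of genus $\geq 1$: using that presentation one can write down the relations satisfied by the standard generators of $\pi_1^{orb}(PB_n(M))$, including the torsion relations imposed at the cone points, and check that the relation killing the cone-point loop already holds in $\pi_1^{orb}(PB_n(M))$ while the corresponding element is nontrivial (torsion) in the fiber group $\pi_1^{orb}(\wt M)$. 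This shows the fiber element dies in the total space, so it lies in $K(M,n-1)$ and is nontrivial there.

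The delicate technical points will be (i) correctly transporting the cone-point torsion generator between the fiber group and the total-space group under the inclusion induced map, keeping track of the orbifold structure, and (ii) verifying that the Bellingeri-type presentation is compatible with the orbifold enhancement, since \cite{Bel04} treats surfaces rather than orbifolds and the torsion relations at cone points must be added by hand. Once these identifications are pinned down, the comparison of orders of the relevant generator in the two groups forces $K(M,n-1)\neq\l 1\r$, completing the proof. I expect step (i) to be the real obstruction, as it requires a careful choice of basepoints and a precise description of how a small loop around a cone point in $\wt M$ maps into the braid group of the ambient orbifold.
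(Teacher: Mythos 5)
There are two genuine gaps here, and both are fatal to the argument as proposed. First, your exactness argument leans on ``the long exact sequence of orbifold homotopy groups associated to $f(M)$'' and identifies $K(M,n-1)$ with the image of a connecting map from $\pi_2^{orb}(PB_{n-1}(M))$. No such sequence exists: the whole point of the theorem is that $f(M)$ is not a fibration (nor even a quasifibration) of orbifolds, and Remark \ref{les} exhibits orbifolds $M\in{\cal C}_0$ with $\pi_k^{orb}(PB_n(M))=\l 1\r$ for all $k\geq 2$ and yet $K(M,n-1)\neq\l 1\r$ --- which directly contradicts your identification of $K$ with a piece of $\pi_2^{orb}$ of the base. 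The paper instead obtains exactness at $\pi_1^{orb}(PB_n(M))$ by comparing with the honest Fadell--Neuwirth fibration of the underlying punctured \emph{surface} $M_{r+s}^0$ (cone points replaced by punctures): Bellingeri's presentation gives explicit normal generators of $\ker f_n$, the surjections $g_n,g_{n-1}$ onto the orbifold braid groups carry these onto normal generators of $\ker f_n^o$, and a diagram chase identifies $\ker f_n^o$ with the image of $\pi_1^{orb}(\wt M)$. (There is also a direct-limit step for the case where $\pi_1^{orb}(M)$ is infinitely generated, which you do not address.)

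Second, your candidate witness for $K(M,n-1)\neq\l 1\r$ --- a small loop around a cone point, torsion in $\pi_1^{orb}(\wt M)$ but allegedly trivial in $\pi_1^{orb}(PB_n(M))$ --- does not work. That loop maps to the generator $\bar P_n^{r+j}$ of $\pi_1^{orb}(PB_n(M))$, which satisfies exactly the same torsion relation $(\bar P_n^{r+j})^{q_j}=1$ there and is not trivial; nothing gets killed by passing to the total space. Producing nontrivial elements of the kernel is precisely the hard content of Flechsig's work, and the paper does not redo it: it takes a small disc $D\subset M$ about a cone point, proves (Lemma \ref{injective}, via Van-Kampen and the injectivity of the factors of an amalgamated free product) that $\pi_1^{orb}(\wt D)\to\pi_1^{orb}(\wt M)$ is injective, and then imports Flechsig's genus-zero result that $K(D,n-1)\neq\l 1\r$ to conclude $K(M,n-1)\neq\l 1\r$. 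You would need either that reduction or an actual construction of a nontrivial kernel element; the comparison of torsion orders you propose does not supply one.
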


Flechsig proved
that for $M\in {\cal C}_0$, if $M$ has a cone point
then $K(M, n-1)\neq \l 1\r$.

To prove Theorem \ref{ig},  first 
we use the presentation of surface pure braid groups from \cite{Bel04},
to produce an explicit set of generators and relations
of the surface orbifold pure braid group of  $M\in {\cal C}_1$. Then, 
we apply the Fadell-Neuwirth fibration theorem for surfaces to
deduce the exact sequence. To prove $K(M, n-1)\neq \l 1\r$, we show
that the homomorphism $\pi_1^{orb}(\wt D)\to \pi_1^{orb}(\wt M)$, for
$D\subset M$ a disc with a cone point, is injective, and then use the above
result of Flechsig for $D$.

In \cite{Rou20} we used a 
stretching technique, and   
there we did not need to appeal to the Fadell-Neuwirth fibration theorem for
punctured complex plane.
However, the stretching technique is not applicable for 
genus $\geq 1$ orbifolds, as in this case the movement of the strings of a braid is more
complicated and a pictorial description is not possible. 

\begin{rem}\label{les}{\rm If 
    $M$ is an aspherical $2$-manifold then using the exact sequence (\ref{1.1}) and by an induction
    on $n$, it follows that
    $\pi_k(PB_n(M))=\l 1\r$, for all $k\geq 2$, and hence (\ref{1.1}) becomes a short exact sequence of fundamental
    groups. Now let $M\in {\cal C}_0$ with a cone point.
    Then by \cite{JF23},  $K(M, n-1)\neq \l 1\r$. We have given
    examples of $M\in {\cal C}_0$ with cone points in
    [\cite{Rou23-1}, Theorem 1.2],
    such that 
    $\pi^{orb}_k(PB_n(M))=\l 1\r$, for all $k\geq 2$. Therefore, for such $M$ there is no long
    exact sequence induced by $f(M)$, similar to (\ref{1.1}), of orbifold homotopy groups.}\end{rem}

  \section{Proof}\label{proofs}
In this section we give the proof of Theorem \ref{ig}.

\begin{proof}[Proof of Theorem \ref{ig}] We have assumed that the underlying
  space of the orbifold is orientable. 
  Also, we assume that the orbifold has a 
  cone point, since otherwise the Fadell-Neuwirth fibration theorem will be applicable.

  First we give the proof in the case when $\pi_1^{orb}(M)$ is
  finitely generated.
  
  \medskip
  \noindent
  {\bf Case 1: $\pi_1^{orb}(M)$ is finitely generated:}  
Under this hypothesis, we can assume that $M$ is of the type $M_r^s$, where
$M_r^s$ is a $2$-dimensional orientable orbifold of genus $g\geq 1$, with
$r$ number of punctures and $s\geq 1$ number of cone points of orders
$q_1,q_2,\ldots,q_s$.

The plan of the proof is as follows.
We start with the direct way to define the orbifold fundamental group of
an orbifold from \cite{Thu91}. Then, we  give an explicit
presentation of the orbifold fundamental group of $PB_n(M)$, using the
presentation of the surface pure braid group of a surface, from \cite{Bel04}. The
homomorphism $\pi_1^{orb}(PB_n(M))\to \pi_1^{orb}(PB_{n-1}(M))$ is
then described using this presentation. Finally, we apply the Fadell-Neuwirth
fibration theorem for surfaces and some group theoretic arguments,
to conclude the proof in this
case.

Let $P=\{1,2,\ldots, n\}$ and $\{x_1,x_2,\ldots, x_n\}\subset M_k^0$ be a fixed subset of
$n$ distinct points. Consider the set $S$ of all
continuous maps $\gamma:P\times I\to M_k^0$,
satisfying the following conditions.
\begin{itemize}
  \item $\gamma(i,t)\neq \gamma(j,t)$ for all $t\in I$ and for $i\neq j$.

\item $\gamma(i,0)=\gamma(i,1)=x_i$ for all $i\in P$.
\end{itemize}
\noindent
Note that $\gamma$ is a loop in $PB_n(M_k^0)$ 
based at $(x_1,x_2,\ldots, x_n)\in PB_n(M_k^0)$. We call it a {\it braid loop}.

Given two braid loops $\gamma_0$ and $\gamma_1$, a homotopy between them,   
  fixing end points, is a map $F:P\times I\times I\to M_k^0$ satisfying the following conditions.
  \begin{itemize}
  \item For each $t\in I$, $F|_{P\times I\times \{t\}}\in S$.
  \item $F|_{P\times I\times \{0\}}=\gamma_0$ and  $F|_{P\times I\times \{1\}}=\gamma_1$.
  \end{itemize}
  Composition between the braid loops $\gamma_0$ and $\gamma_1$ is
  defined as $\gamma=\gamma_0*\gamma_1$, where for $i\in P$, the following are
  satisfied.
  \begin{itemize}
        \item $\gamma(i,t)=\gamma_0(i, 2t)$ for $0\leq t \leq \frac{1}{2}$.
      \item $\gamma(i, t)=\gamma_1(i, 2t-1)$ for $\frac{1}{2}\leq t \leq 1$.
      \end{itemize}
      Clearly, the homotopy classes under this composition
      law of braid loops, give $\pi_1(PB_n(M_k^0))$, with base point $(x_1,x_2,\ldots, x_n)$.
      We are not including the base point in the notation of the fundamental group, as 
      it will remain fixed during the proof. For $PB_{n-1}(M_k^0)$ the
      base point will be $(x_1,x_2,\ldots, x_{n-1})$.

 A presentation of the group
  $\pi_1(PB_n(M_k^0))$, in terms of generators and relations is given in
  [\cite{Bel04}, Theorem 5.1]. In Figure 1 we show all the generators of
  this group. Let the list of relations be $\bf R$. We do not
  reproduce the list of relations here, as
we will not need its explicit descriptions. That is, we have the following.

$$\pi_1(PB_n(M_k^0))=\l A^j_i, B^j_i,C_l^m,P_i^p; i=1,2,\ldots, n;$$
$$j=1,2,\ldots, g; m < l, m=1,2,\ldots, n-1; p=1,2,\ldots, k\ |\ {\bf R}\r.$$

Now let $k=r+s$ and $s\geq 1$.
Next, we replace the punctures from $r+1$ to $r+s$ by cone points of
orders $q_1,q_2,\ldots, q_s$,
respectively, as shown in Figure 2. We also rename the generators with a `bar' to avoid
confusion.

     \medskip
\centerline{\includegraphics[height=5cm,width=10cm,keepaspectratio]{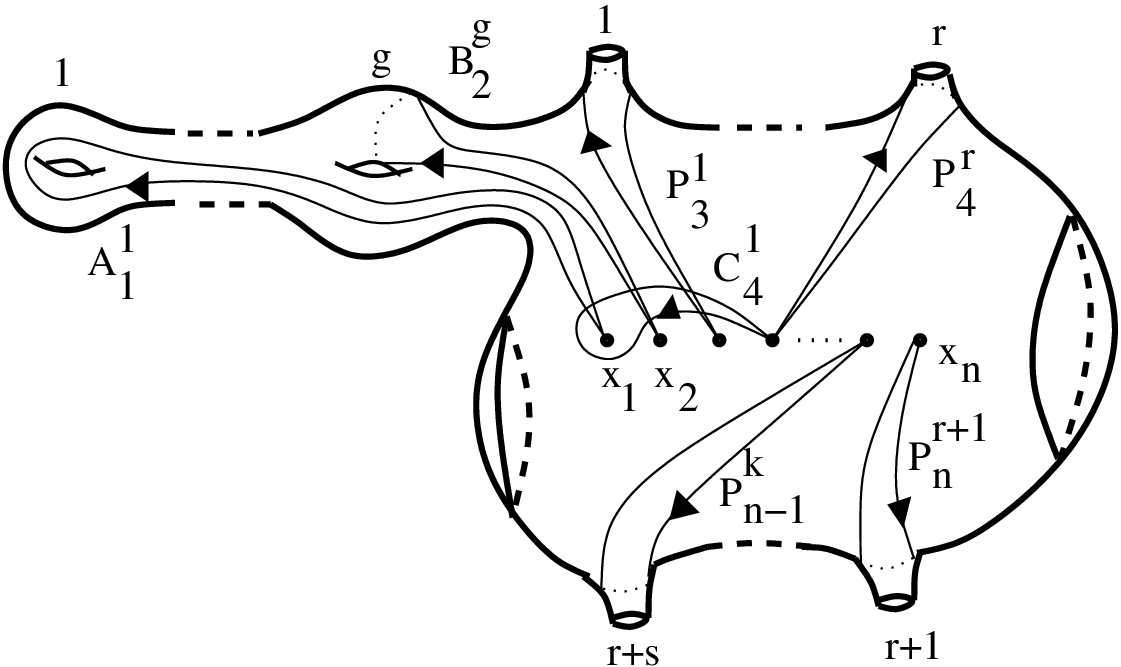}}

\centerline{Figure 1: Generators of the surface pure braid group of $M_k^0$.}

\medskip

Then, a presentation of the surface pure orbifold braid group
$\pi_1^{orb}(PB_n(M_r^s))$ is obtained from the presentation
of $\pi_1(PB_n(M_{r+s}^0))$ by adding the extra relations  
$(\bar P_i^{r+j})^{q_j}$, for $i=1,2,\ldots, n; j=1,2,\ldots s$. Since, by
definition of orbifold fundamental group (see \cite{All02} or \cite{Thu91}), if a loop
$\eta$ circles around a cone point of order $q$, then $\eta^q=1$ appears as a
relation. The only
difference between a puncture and a cone point on a $2$-dimensional orbifold,
which reflects on the orbifold fundamental group, is this finite order
relation. Hence, we have the
following. 

$$\pi_1^{orb}(PB_n(M_r^s))=$$
$$\l \bar A^j_i, \bar B^j_i, \bar C_l^m, \bar P_i^p; i=1,2,\ldots, n; j=1,2,\ldots, g;m < l, m=1,2,\ldots, n-1; $$
$$p=1,2,\ldots, r+s\ |\ \bar {{\bf R}}\cup \{(\bar P_i^{r+j})^{q_j},\
\text{for}\ i=1,2,\ldots, n, j=1,2,\ldots s\}\r.$$
\noindent
Here $\bar {{\bf R}}$ is the same set of relations as in $\bf R$ but the generators
are replaced with a `bar'.

\medskip

\centerline{\includegraphics[height=5cm,width=10cm,keepaspectratio]{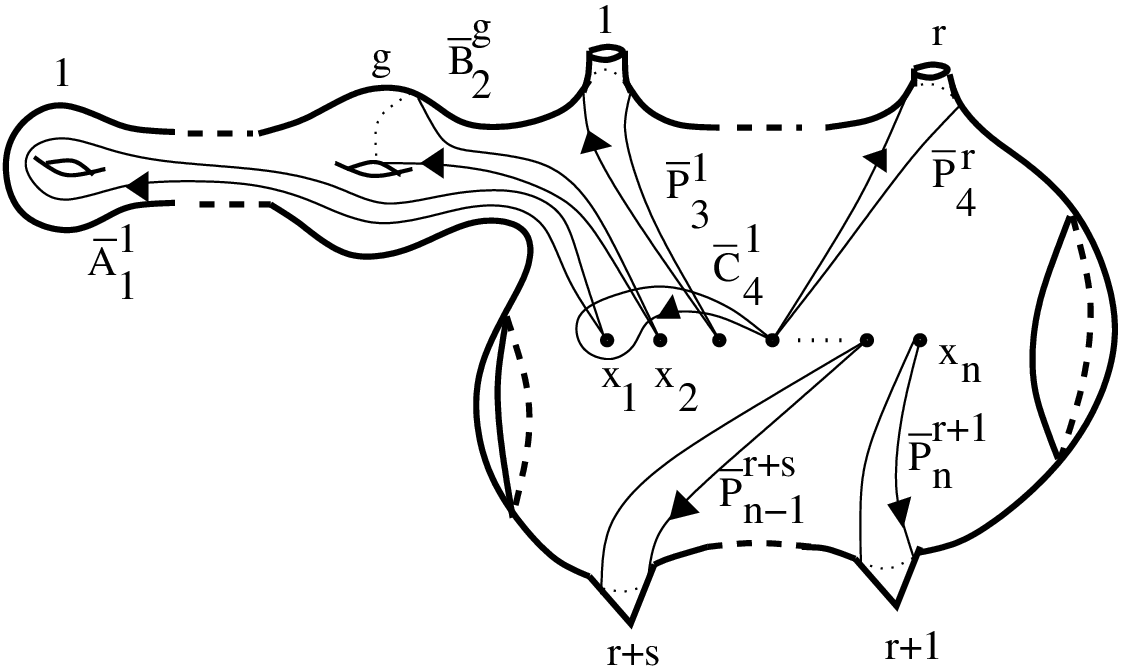}}

\centerline{Figure 2: Generators of the surface pure orbifold braid group.}

\medskip

Clearly, now there is the following surjective homomorphism, which sends a generator $X$ of
$\pi_1(PB_n(M_{r+s}^0))$ to $\bar X$, 
$$g_n:\pi_1(PB_n(M_{r+s}^0))\to \pi_1^{orb}(PB_n(M_r^s)).$$
This homomorphism is also obtained by applying the $\pi_1^{orb}$ functor
on the inclusion $PB_n(M_{r+s}^0)\to PB_n(M_r^s)$.

Therefore, we have the following commutative diagram.

\centerline{
  \xymatrix{\pi_1(PB_n(M_{r+s}^0))\ar[r]^{\!\!\!\!\!\! f_n}\ar[d]^{g_n}&\pi_1(PB_{n-1}(M_{r+s}^0))\ar[d]^{g_{n-1}}\\
    \pi_1^{orb}(PB_n(M_r^s))\ar[r]^{f^o_n}&\pi_1^{orb}(PB_{n-1}(M_r^s)).}}
\noindent
Here, $f_n=f(M_{r+s}^0)_*$, which is induced by the projection to the first $n-1$ coordinates. 
Recall that, $f(M_{r+s}^ 0)$ is a fibration by the Fadell-Neuwirth fibration theorem.

$f_n^o$ is also induced by a similar projection. However, we have
seen in [\cite{Rou20}, Proposition 2.11]) that the corresponding
homomorphism of this projection, on an associated   
configuration Lie groupoid is not a fibration.

Since $f_n$ and $g_{n-1}$ are both surjective, together with the Fadell-Neuwirth
fibration theorem we get the following commutative diagram.

\centerline{
  \xymatrix{\pi_1(\wt M_{r+s}^0)\ar[r]&\pi_1(PB_n(M_{r+s}^0))\ar[r]^{\!\!\!\!\! f_n}\ar[d]^{g_n}&\pi_1(PB_{n-1}(M_{r+s}^0))\ar[d]^{g_{n-1}}\ar[r]&1\\
    &\pi_1^{orb}(PB_n(M_r^s))\ar[r]^{f^o_n}&\pi_1^{orb}(PB_{n-1}(M_r^s))\ar[r]&1.}}


Here $\wt M_{r+s}^0=M_{r+s+n-1}^0$ is the fiber over the point
$(x_1,x_2,\ldots, x_{n-1}$) of the projection map. The points $x_1,x_2,\ldots, x_{n-1}$
are replaced with punctures in $\wt M_{r+s}^0$.

It is clear that, the kernel of $f_n$ is normally generated by the
following generators of $\pi_1(PB_n(M_{r+s}^0))$. See Figure 3.
$$\{A^j_n, B^j_n,C_n^m,P_n^p; j=1,2,\ldots, g; m=1,2,\ldots, n-1; p=1,2,\ldots, r+s\}.$$


\centerline{\includegraphics[height=5cm,width=10cm,keepaspectratio]{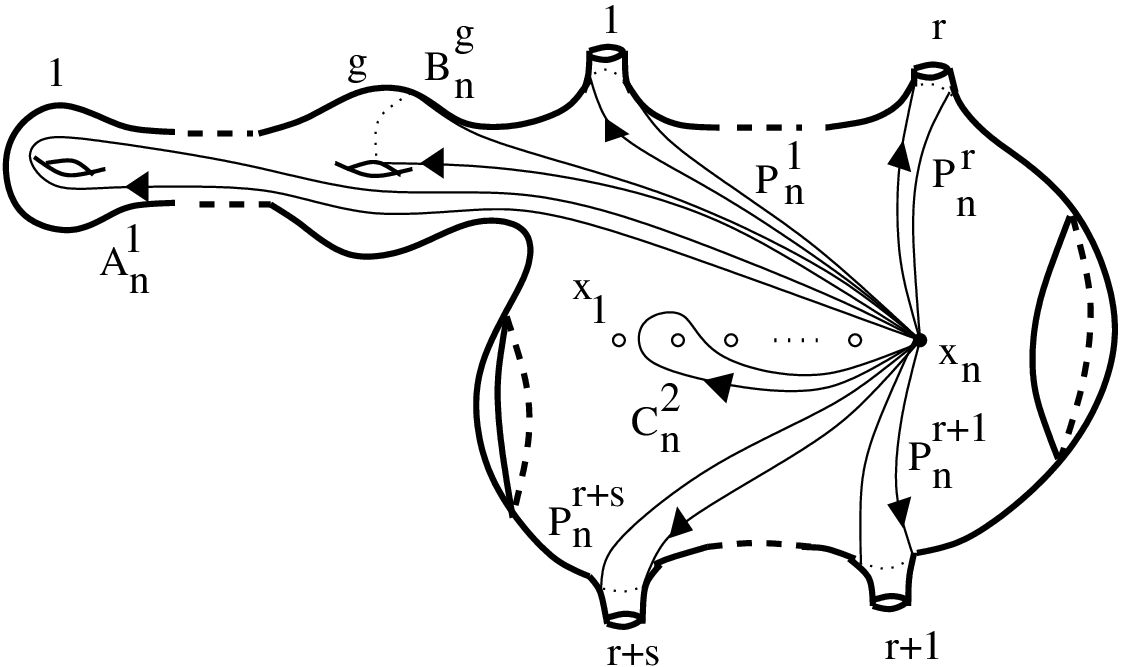}}

\centerline{Figure 3: Generators of the kernel of $f_n$.}

Since $f_n$ is induced by
a fibration, $\pi_2(PB_k(M_{r+s}^0))=0$ for all $k$ (Remark \ref{les}).

Therefore, the above generators, which generate $\pi_1(\wt M_{r+s}^0)$, generate a normal subgroup of
$\pi_1(PB_n(M_{r+s}^0))$.

On the other hand, the kernel of $f^o_n$ is normally generated by the following generators
of $\pi_1^{orb}(PB_n(M_r^s))$. See Figure 4.
$$\{\bar A^j_n, \bar B^j_n, \bar C_n^m, \bar P_n^p;
j=1,2,\ldots, g; m=1,2,\ldots, n-1; p=1,2,\ldots, r+s\}.$$

Next, note that the image of $\pi_1^{orb}(\wt M_r^s)$ in $\pi_1^{orb}(PB_n(M_r^s))$ is obtained
from the above presentation of $\pi_1(\wt M_{r+s}^0)$, by adding the finite order
relations $(\bar P_n^{r+j})^{q_j}$, for $j=1,2,\ldots s$. But this is
the presentation of $\pi_1^{orb}(\wt M_r^s)$, where $\wt M_r^s=M_{r+n-1}^s$.
Here also, $x_1,x_2,\ldots, x_{n-1}$
are replaced with punctures in $\wt M_r^s$.

\medskip

\centerline{\includegraphics[height=5cm,width=10cm,keepaspectratio]{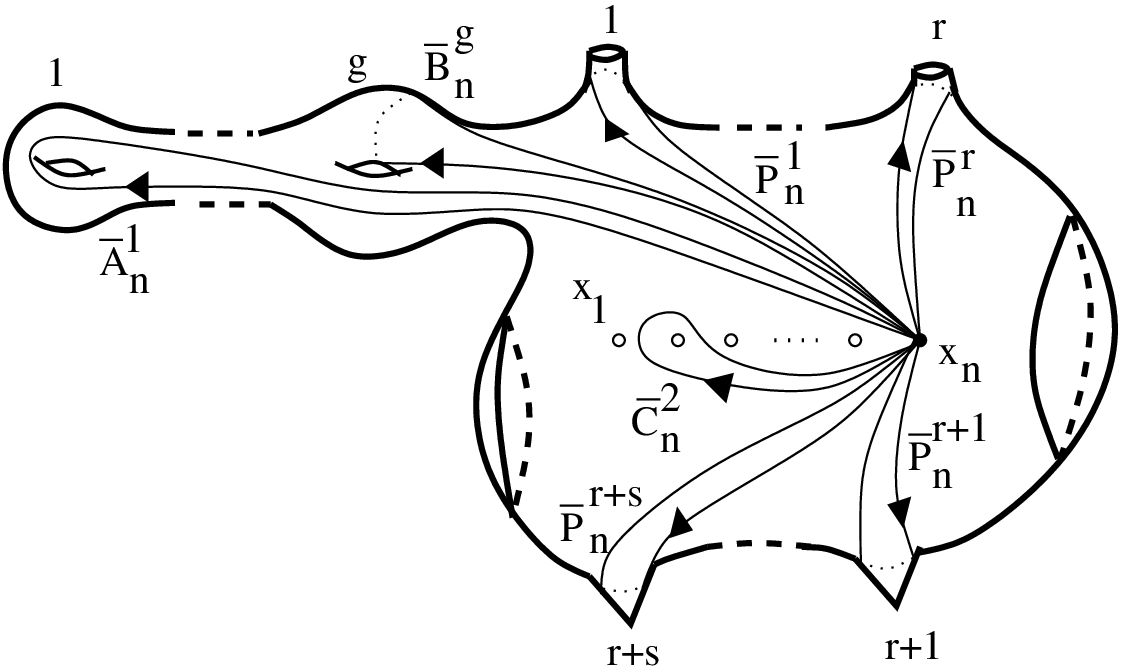}}

\centerline{Figure 4: Generators of the kernel of $f_n^o$.}

\medskip

Since $g_n$ is a surjective homomorphism, and
it sends the image of $\pi_1(\wt M_{r+s}^0)$ onto the image of $\pi_1^{orb}(\wt M_r^s)$, we
see that the above generators generate a normal subgroup of $\pi_1^{orb}(PB_n(M_r^s))$.
Hence, the kernel of $f^o_n$ is exactly the image
of $\pi_1^{orb}(\wt M_r^s)$. 

Therefore, from the above argument we get the following commutative diagram.

\centerline{
  \xymatrix@C-1pc{1\ar[r]&1\ar[r]\ar[d]&\pi_1(\wt M_{r+s}^0)\ar[r]\ar[d]&\pi_1(PB_n(M_{r+s}^0))\ar[r]^{\!\!\!\!\! f_n}\ar[d]^{g_n}&\pi_1(PB_{n-1}(M_{r+s}^0))\ar[d]^{g_{n-1}}\ar[r]&1\\
    1\ar[r]&K(M_r^s, n-1)\ar[r]&\pi_1^{orb}(\wt M_r^s)\ar[r]&\pi_1^{orb}(PB_n(M_r^s))\ar[r]^{f^o_n}&\pi_1^{orb}(PB_{n-1}(M_r^s))\ar[r]&1.}}

\medskip

Here, $K(M_r^s, n-1)$ is denoting the kernel
of $\pi_1^{orb}(\wt M_r^s)\to \pi_1^{orb}(PB_n(M_r^s))$.
This completes the proof of Theorem \ref{ig} in the finitely generated case.

Next, we give the proof in the case 
when $\pi_1^{orb}(M)$ is infinitely generated. We will use the
finitely generated case in a direct limit argument.

\medskip
\noindent
{\bf Case 2: $\pi_1^{orb}(M)$ is infinitely generated:}
We write $M$ as an increasing union $\bigcup_{i\in {\Bbb N}}M_{r_i}^{s_i}$ of suborbifolds $M_{r_i}^{s_i}$.
Here $r_1\leq r_2\leq\cdots$, 
$s_1\leq s_2\leq\cdots$ and $M_{r_i}^{s_i}$ has finite ($\geq 1$) genus. Clearly, 
$\pi_1^{orb}(M_{r_i}^{s_i})$ is infinite and finitely generated.

Now, we have inclusions $PB_n(M_{r_i}^{s_i})\subset PB_n(M_{r_j}^{s_j})$ for $i \leq j$,
and $PB_n(M)=\bigcup_{i\in {\Bbb N}} PB_n(M_{r_i}^{s_i})$.
Then, there is the following commutative diagram.

\centerline{
  \xymatrix@C+1pc{PB_n(M_{r_i}^{s_i})\ar[r]^{\!\!\! f(M_{r_i}^{s_i})}\ar[d]&PB_{n-1}(M_{r_i}^{s_i})\ar[d]\\
PB_n(M_{r_j}^{s_j})\ar[r]^{\!\!\!\!\!\!f(M_{r_j}^{s_j})}&PB_{n-1}(M_{r_j}^{s_j}).}}

Using Case $1$, the above diagram induces the following commutative
diagram. Next, we take the direct limit of this directed system of four-term exact sequences, and
complete the proof of the four-term exact sequence of 
Theorem \ref{ig}.

\centerline{
  \xymatrix@C-1.1pc{1\ar[r]&K(M_{r_i}^{s_i}, n-1)\ar[r]\ar[d]&\pi_1^{orb}(\wt M_{r_i}^{s_i})\ar[r]\ar[d]&\pi_1^{orb}(PB_n(M_{r_i}^{s_i}))\ar[r]^{\!\!\!\!\! f^o_n}\ar[d]&\pi_1^{orb}(PB_{n-1}(M_{r_i}^{s_i}))\ar[r]\ar[d]&1\\
1\ar[r]&K(M_{r_j}^{s_j}, n-1)\ar[r]&\pi_1^{orb}(\wt M_{r_j}^{s_j})\ar[r]&\pi_1^{orb}(PB_n(M_{r_j}^{s_j}))\ar[r]^{\!\!\!\!\! f^o_n}&\pi_1^{orb}(PB_{n-1}(M_{r_j}^{s_j}))\ar[r]&1.}}

We now proceed to prove that
$K(M, n-1)\neq \l 1\r$, if $M\in {\cal C}_1$ has a cone point.

Let $D$ be a small open disc in $M$ containing one cone point, and assume that
$\partial D$ contains no cone point. 
Then, we have the following commutative diagram, induced by the inclusion $D\subset M$.

\centerline{
  \xymatrix@C-.7pc{1\ar[r]&K(D, n-1)\ar[r]\ar[d]&\pi_1^{orb}(\wt D)\ar[r]\ar[d]&\pi_1^{orb}(PB_n(D))\ar[r]\ar[d]&\pi_1^{orb}(PB_{n-1}(D))\ar[r]\ar[d]&1\\
    1\ar[r]&K(M, n-1)\ar[r]&\pi_1^{orb}(\wt M)\ar[r]&\pi_1^{orb}(PB_n(M))\ar[r]&\pi_1^{orb}(PB_{n-1}(M))\ar[r]&1.}}

We now need the following lemma.

\begin{lemma}\label{injective}The homomorphism 
$\pi_1^{orb}(\wt D)\to \pi_1^{orb} (\wt M)$
is injective.\end{lemma}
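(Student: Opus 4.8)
The plan is to realize $\wt M$ as the union of $\wt D$ with the complementary suborbifold $N:=M-\mathrm{int}(D)$ along the circle $\partial D$, and to exhibit the induced decomposition of $\pi_1^{orb}(\wt M)$ as an amalgamated free product in which $\pi_1^{orb}(\wt D)$ is a vertex group. Since $\partial D$ carries no cone point, it is an ordinary simple closed curve in the smooth locus, separating $\wt M$ into $\wt D$ (an open disc carrying the single cone point of order $q$ together with the $n-1$ punctures $x_1,\dots,x_{n-1}$, which we may take to lie in $D$ since the fundamental group is independent of the base configuration) and $N$ (an orientable surface of genus $g\geq 1$ with the remaining $r$ punctures and $s-1$ cone points and with one boundary circle $\partial D$). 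Both pieces are good $2$-orbifolds, and I would first record their orbifold fundamental groups as free products: writing $c$ for the loop around the cone point and $p_1,\dots,p_{n-1}$ for the loops around the punctures inside $D$, one has $\pi_1^{orb}(\wt D)\cong (\Bbb Z/q)*F_{n-1}$ with the $\Bbb Z/q$ factor generated by $c$, while $\pi_1^{orb}(N)$ is a free product of a nonabelian free group (coming from the genus and punctures of $N$) with the cyclic groups $\Bbb Z/q_j$ of the remaining cone points.

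Next I would apply the Seifert--van Kampen theorem for orbifold fundamental groups to this decomposition. Because $\partial D$ is disjoint from all cone points, this reduces to ordinary van Kampen on the underlying punctured surfaces followed by adjoining the finite-order relations $\g_j^{q_j}$, each of which is supported on a single side of $\partial D$; hence
\[\pi_1^{orb}(\wt M)\ \cong\ \pi_1^{orb}(\wt D)\ *_{\,\Bbb Z}\ \pi_1^{orb}(N),\]
the amalgamation being over $\pi_1(\partial D)=\Bbb Z$. By the normal form theorem for amalgamated free products, each vertex group embeds in the amalgam provided the edge group injects into both vertex groups. So it suffices to check that the class of $\partial D$ has infinite order in $\pi_1^{orb}(\wt D)$ and in $\pi_1^{orb}(N)$.

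For the $\wt D$ side, the class of $\partial D$ is the boundary word $c\,p_1\cdots p_{n-1}$. Since $n\geq 2$ there is at least one puncture, so this is a cyclically reduced word of syllable length $\geq 2$ in $(\Bbb Z/q)*F_{n-1}$ (its first syllable is the nontrivial torsion element $c$, its second the infinite-order generator $p_1$); such a word is not conjugate into a factor and therefore has infinite order. For the $N$ side, $N$ is an orientable surface with one boundary component and genus $g\geq 1$, hence has negative Euler characteristic and its boundary is essential: the class of $\partial D$ is represented by the standard boundary word $\prod_{i=1}^{g}[a_i,b_i]\cdot(\text{peripheral loops})$, which is nontrivial and not conjugate into any cyclic factor because it genuinely involves the genus generators $a_1,b_1$ spanning free factors, so it too has infinite order. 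With both edge maps injective the amalgam is genuine, the canonical map sends $\pi_1^{orb}(\wt D)$ isomorphically onto the corresponding vertex group, and therefore $\pi_1^{orb}(\wt D)\to\pi_1^{orb}(\wt M)$ is injective, proving the lemma.

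The step I expect to require the most care is the orbifold Seifert--van Kampen isomorphism: one must verify that adjoining the cone-point relations commutes with the amalgamation, equivalently that the edge group $\Bbb Z$ still injects into each vertex group after the finite-order relations are imposed. This is precisely what the two infinite-order computations guarantee, and it is also the point at which the hypotheses enter — $n\geq 2$ ensures the boundary word of $\wt D$ is not a single torsion syllable, and $g\geq 1$ ensures the boundary of $N$ is essential. Once injectivity of the edge maps is in hand, the conclusion is a formal consequence of the amalgamated-product normal form.
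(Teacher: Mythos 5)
Your argument is correct, and it follows the same skeleton as the paper's proof: cut $\wt M$ along the circle $\partial D$ (which avoids all cone points), invoke the orbifold van Kampen theorem to write $\pi_1^{orb}(\wt M)$ as an amalgamated free product of $\pi_1^{orb}(\wt D)$ and the fundamental group of the complementary piece over $\pi_1(\partial D)\cong {\Bbb Z}$, and conclude injectivity of the vertex group from the normal form theorem once the edge group is known to inject into both factors. The only genuine divergence is in how that edge-group injectivity is established. The paper does it indirectly and uniformly: for each piece $X$ it passes to the underlying punctured surface $\hat X$ (forgetting the cone structure), uses the classical fact that $\pi_1(\partial D)\to\pi_1(\hat X)$ is injective, and pulls injectivity back through the factorization $\pi_1(\partial D)\to\pi_1^{orb}(X)\to\pi_1(\hat X)$. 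You instead write explicit free-product presentations $({\Bbb Z}/q)*F_{n-1}$ and $F_{2g+r}*(\ast_j{\Bbb Z}/q_j)$, identify the class of $\partial D$ with the boundary word on each side, and check it has infinite order by a cyclically-reduced syllable-length argument. Your route is more computational and gives slightly more information (the boundary class generates an infinite cyclic subgroup of each vertex group), but requires care with the degenerate cases in the normal form (e.g.\ when all the remaining cone points and punctures lie on one side); the paper's route is shorter and sidesteps the presentations entirely. Minor quibble: the second syllable of your boundary word on the $\wt D$ side is the single element $p_1\cdots p_{n-1}$ of the factor $F_{n-1}$, not the generator $p_1$, but this does not affect the argument since that element is nontrivial for $n\geq 2$. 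Both proofs are complete.
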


Before we give the proof of the lemma, note that,
since Flechsig proved in \cite{JF23} that $K(D, n-1)$ is non-trivial,
we conclude that $K(M, n-1)$
is also non-trivial.

This completes the proof of the theorem.\end{proof}

\begin{proof}[Proof of Lemma \ref{injective}] We use Van-Kampen theorem for orbifolds. Let $\wt M_1=\wt M-\wt D$ and
$\wt D_1=\wt D\cup \partial D$. We now
check that $\partial \wt D_1\subset X$ induces an injective homomorphism in
orbifold fundamental groups, for $X=\wt D_1$ or $\wt M_1$. For 
$X\in {\cal C}_1$, let $\hat X$ be the smooth $2$-manifold after
disregarding the cone structure of the cone points in $X$, that is, $\hat X$ is
the underlying space of $X$. Then $\pi_1(\hat X)$
is obtained from $\pi_1^{orb}(X)$ by adding the relations $\alpha^q=1$ for any
small loop $\alpha$ in $X$ around a cone point of order $q$. It is well known
that $\partial \wt D_1\subset \hat X$ induces an injective homomorphism in
fundamental groups, for $X=\wt D_1$ or $\wt M_1$. Therefore, we have the 
following commutative diagram, where the horizontal homomorphism is injective. Hence
the slanted homomorphism is also injective.

\centerline{
  \xymatrix{&\pi_1^{orb}(X)\ar[d]\\
    \pi_1(\partial \wt D_1)\ar[ur]\ar[r]&\pi_1(\hat X).}}

Note that $\wt M=\wt D_1\cup \wt M_1$, and hence applying Van-Kampen theorem
we get that $\pi_1^{orb}(\wt M)$ is isomorphic to a generalized free product
$G_1*_HG_2$ where, $G_1\simeq \pi_1^{orb}(\wt D_1)\simeq \pi_1^{orb}(\wt D)$,
$H\simeq \pi_1(\partial \wt D_1)$ and $G_2\simeq \pi_1^{orb}(\wt M_1)$.
Since the homomorphisms $H\to G_i$, for $i=1,2$ are both injective, it is
known that $G_1\to G_1*_HG_2$ is also injective.
This completes the proof of the lemma.
\end{proof}

\begin{rem}\label{k}{\rm Recall that for a manifold $M$ of dimension $\geq 2$,
    the Fadell-Neuwirth fibration theorem also says
    that the projection to the first $k$-coordinates, for $k < n$, gives a
    fibration $PB_n(M)\to PB_k(M)$, with fibers homeomorphic  to
    $PB_{n-k}(M-\{k\ \text{points}\})$. Our proof can be suitably modified to give a
    proof of the following exact sequence, where $M\in {\cal C}_1$
    and $\wt M=M-\{k\ \text{smooth points}\}$.

    \centerline{
 \xymatrix@C-.7pc{1\ar[r]&K(M,k)\ar[r]&\pi_1^{orb}(PB_{n-k}(\wt M))\ar[r]&\pi_1^{orb}(PB_n(M))\ar[r]&\pi_1^{orb}(PB_k(M))\ar[r]&1.}}}\end{rem}

  \newpage
\bibliographystyle{plain}
\ifx\undefined\bysame
\newcommand{\bysame}{\leavevmode\hbox to3em{\hrulefill},}
\fi

\end{document}